
\documentclass{amsart}
\usepackage{subfigure}

\usepackage{graphicx,rotating}
\usepackage[left=1.5in,top=1.2in,right=1.5in,bottom=1.2in,head=.2in]{geometry}
\setlength{\unitlength}{1in}
\newtheorem{proposition}{Proposition}[section]
\newtheorem{theorem}[proposition]{Theorem}

\theoremstyle{definition}

\newtheorem{remark}[proposition]{Remark}

\newcommand{\tb}{\text{tb}}
\newcommand{\rot}{\text{rot}}

\newcommand{\lk}{\text{lk}}
\newcommand{\Wh}{\text{Wh}}
\begin{document}

%

\title{A new family of links topologically, but not smoothly,\\concordant to the Hopf link}

\author{Christopher W. Davis}
\address{Department of Mathematics, The University of Wisconsin at Eau Claire}
\email{daviscw@uwec.edu}
\urladdr{http://people.uwec.edu/daviscw/}

\author{Arunima Ray}
\address{Department of Mathematics, Brandeis University}
\email{aruray@brandeis.edu}
\urladdr{http://people.brandeis.edu/~aruray/}

\date{\today}
\subjclass[2010]{57M25}
\keywords{Hopf link, link concordance}

\begin{abstract}
We give new examples of 2--component links with linking number one and unknotted components that are topologically concordant to the positive Hopf link, but not smoothly so -- in addition they are not smoothly concordant to the positive Hopf link with a knot tied in the first component. Such examples were previously constructed by Cha--Kim--Ruberman--Strle; we show that our examples are distinct in smooth concordance from theirs. 
\end{abstract}

\maketitle


\section{Introduction}

The study of smooth and topological knot concordance can be considered to be a model for the significant differences between the smooth and topological categories in four dimensions. For instance, mirroring the fact that there exist 4--manifolds that are homeomorphic but not diffeomorphic, there exist knots that are topologically slice but not smoothly slice, i.e.\ knots that are topologically concordant to the unknot, but not smoothly so (see, for example, \cite{ChaPow14a, CHHo13, CHo15, End95, Gom86, HeK12, HeLivRub12, Hom14}). Similarly, one might ask whether there are links that are topologically concordant to the Hopf link, but not smoothly so. Infinitely many examples of such links were constructed by Cha--Kim--Ruberman--Strle in \cite{ChaKimRubStr12}. We construct another infinite family that we show to be distinct from the known examples in smooth concordance. 

In the following, all links will be considered to be ordered and oriented. Two links will be said to be concordant (resp.\ topologically concordant) if their (ordered, oriented) components cobound smooth (resp.\ topologically locally flat) properly embedded annuli in $S^3 \times [0,1]$. From now on, when we say the Hopf link, we refer to the positive Hopf link, i.e.\ the components are oriented so that the linking number is one.

\begin{figure}[t]
  \centering
  \includegraphics[width=4in]{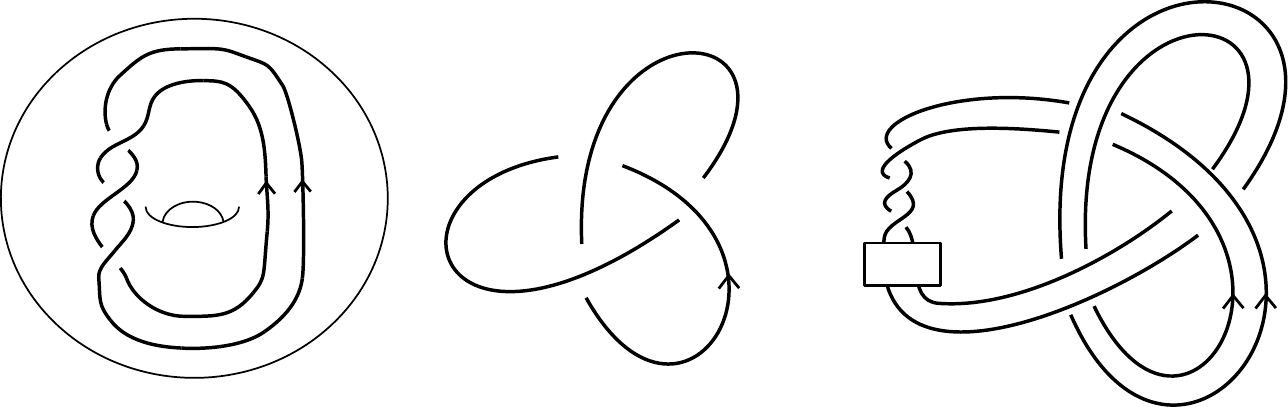}
  \put(-3.5,-0.20){$P$}
  \put(-2.25,-0.20){$K$}  
  \put(-0.85,-0.2){$P(K)$}  
	\put(-1.23,0.4){\small $3$}
  \caption{The (untwisted) satellite operation on knots. The boxes containing `$3$' indicate that all the strands passing vertically through the box should be given three full positive twists, to account for the writhe in the given diagram of $K$.}\label{fig:satellite}
\end{figure}

Any 2--component link with 
second component unknotted corresponds to a knot inside a solid torus, i.e.\ a \textit{pattern}, by carving out a regular neighborhood of the second component in $S^3$. Any pattern $P$ induces a function on the knot concordance group $\mathcal{C}$ via the usual satellite construction, called a \textit{satellite operator}, given by
\begin{align*}
P:\mathcal{C}&\rightarrow\mathcal{C}\\
K&\mapsto P(K)
\end{align*}
where $P(K)$ is the satellite knot with companion $K$ and pattern $P$; see Fig.~\ref{fig:satellite} and~\cite[p.\ 111]{Ro90} for more details. 

We will occasionally work in a slight generalization of usual concordance, which we now describe. We say that two knots are \textit{exotically concordant} if they cobound a smooth, properly embedded annulus in a smooth 4--manifold \textit{homeomorphic} to $S^3\times [0,1]$ (but not necessarily \textit{diffeomorphic}). $\mathcal{K}$ modulo exotic concordance forms an abelian group called the \textit{exotic knot concordance group}, denoted by $\mathcal{C}^\text{ex}$. If the 4--dimensional smooth Poincar\'e Conjecture is true, we can see that $\mathcal{C}=\mathcal{C}^\text{ex}$ \cite{CDR14}. Any pattern $P$ induces a well-defined satellite operator $P:\mathcal{C}^\text{ex}\rightarrow \mathcal{C}^\text{ex}$ mapping $K\mapsto P(K)$. 

Studying satellite operators can yield information about link concordance using the following proposition. 

\begin{proposition}[Proposition 2.3 of \cite{CDR14}\label{prop:diff_functions}]If the 2--component links $L_0$ and $L_1$ are concordant (or even exotically concordant), then the corresponding patterns $P_0$ and $P_1$ induce the same satellite operator on $\mathcal{C}^\text{ex}$, i.e.\ for any knot $K$, if the links $L_0$ and $L_1$ are smoothly concordant, then the knots $P_0(K)$ and $P_1(K)$ are exotically concordant. If $L_0$ and $L_1$ are topologically concordant, then $P_0(K)$ and $P_1(K)$ are topologically concordant. \end{proposition}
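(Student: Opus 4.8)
The plan is to perform the satellite construction simultaneously along the entire concordance of the second components, obtaining a cobordism that I will then identify with $S^3 \times [0,1]$ by topological surgery. Write $L_i = a_i \cup b_i$ with $b_i$ the unknotted second component, and let $C = A \cup B \subset S^3 \times [0,1]$ be the concordance, where $A$ is the annulus from $a_0$ to $a_1$ and $B$ the disjoint annulus from $b_0$ to $b_1$. I fix a normal framing of $B$ restricting to the $0$-framings of $b_0$ and $b_1$ and let $\nu(B) \cong B \times D^2$ be the corresponding tubular neighborhood. Set $X = (S^3 \times [0,1]) \setminus \mathrm{int}\,\nu(B)$; its boundary consists of the solid tori $V_i = S^3 \setminus \nu(b_i)$, which contain the patterns $P_i = a_i$, at the two ends, together with a lateral piece $\partial\nu(B)|_{\mathrm{lat}} \cong T^2 \times [0,1]$. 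I then form
\[
Y = X \cup_\phi \big(E(K) \times [0,1]\big),
\]
gluing the lateral torus to $\partial E(K) \times [0,1]$ by the product of the identity on $[0,1]$ with the meridian--longitude identification defining the satellite operation. By construction the two ends of $Y$ are $V_i \cup E(K) = S^3$ containing $P_i(K)$, and since $A$ is disjoint from $B$ it lies in $X \subset Y$ as a properly embedded annulus from $P_0(K)$ to $P_1(K)$.

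It then remains to show that $Y$ is homeomorphic to $S^3 \times [0,1]$. I would prove this by verifying that $Y$ is a topological h-cobordism between the two copies of $S^3$ and invoking Freedman's $4$--dimensional topological h-cobordism theorem, which applies because the fundamental group will be trivial, hence good. The vanishing $H_*(Y, S^3 \times \{0\}) = 0$ follows from a Mayer--Vietoris computation, reflecting at the level of the whole cobordism the fact that at each end, gluing the homology circle $E(K)$ to $V_i$ along the satellite map yields the homology sphere $S^3$. For the fundamental group I would run van Kampen: the knot group $\pi_1(E(K))$ is normally generated by the meridian $\mu_K$, which $\phi$ identifies with a curve that dies in $\pi_1(X)$, exactly as the meridian is killed when the satellite is assembled at a single level; this forces $\pi_1(Y) = 1$. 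Together these give that both inclusions $S^3 \hookrightarrow Y$ are homotopy equivalences, so $Y$ is an h-cobordism and Freedman's theorem yields a homeomorphism $Y \cong S^3 \times [0,1]$.

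With this homeomorphism in hand the two cases follow. If $L_0$ and $L_1$ are smoothly concordant then $C$, and hence $A \subset Y$, is smooth while $Y$ is a smooth manifold only \emph{homeomorphic} --- not necessarily diffeomorphic --- to $S^3 \times [0,1]$; thus $A$ exhibits $P_0(K)$ and $P_1(K)$ as exotically concordant, which is the best one can expect from a $4$--dimensional h-cobordism. If $L_0$ and $L_1$ are merely topologically concordant, the identical construction with locally flat annuli produces a topological $Y \cong S^3 \times [0,1]$ containing the flat annulus $A$, giving a topological concordance; and if the $L_i$ are only exotically concordant one runs the argument with $S^3 \times [0,1]$ replaced by the homeomorphic smooth manifold they cobound. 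I expect the main obstacle to be confirming that $Y$ is a genuine h-cobordism with trivial $\pi_1$ --- in particular pinning down the van Kampen computation so that the knot group is completely killed --- together with the coherence of the normal framing of $B$ and the gluing $\phi$ over all of $[0,1]$ despite critical points of the projection $B \to [0,1]$; once $Y$ is known to be such an h-cobordism, Freedman's theorem supplies the rest.
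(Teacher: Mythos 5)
Your construction is exactly the one behind the cited result: the paper itself states this proposition without proof, quoting \cite{CDR14}, and the argument there (like the cognate cut-and-reglue argument this paper gives for Proposition 2.4) is precisely your $Y = \big((S^3\times[0,1])\setminus \mathrm{int}\,\nu(B)\big)\cup_\phi \big(E(K)\times[0,1]\big)$, followed by the h-cobordism theorem of Freedman. So the approach is right; the only thing to settle is the step you yourself flagged as the main obstacle, and it does close, for a reason you should make explicit because it is exactly where the hypothesis that the second components are \emph{unknotted} enters.

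Here is the missing justification. In $\pi_1(X)$ the class $\lambda_B$ (the framed pushoff generator of the lateral torus, which $\phi$ identifies with $\mu_K$) is homotopic within $\partial\nu(B)$ to the $0$--framed longitude $\lambda_{b_0}$ of $b_0$; since $b_0$ is unknotted, $\lambda_{b_0}$ bounds a meridian disk of the solid torus $V_0 = (S^3\times\{0\})\setminus\nu(b_0)$, and $V_0\subseteq \partial X\subseteq X$. Hence $\lambda_B = 1$ in $\pi_1(X)$ --- this is the precise sense in which ``the meridian is killed as at a single level,'' and note it is \emph{not} automatic: $\lambda_B$ is a priori only nullhomologous in $X$, and $\pi_1(X)$ itself can be badly non-abelian (e.g.\ for concordances of unknots obtained from knotted spheres in $S^4$), so without the disk in the solid-torus end this step would fail. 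Granting it, van Kampen gives $\pi_1(Y) = \big(\pi_1(X)\ast\pi_1(E(K))\big)/\langle\langle \mu_B\lambda_K^{-1},\ \lambda_B\mu_K^{-1}\rangle\rangle$; the relation $\mu_K = \lambda_B = 1$ kills all of $\pi_1(E(K))$ (which is the normal closure of $\mu_K$), in particular $\lambda_K = 1$, whence $\mu_B = 1$; and $\pi_1(X)/\langle\langle\mu_B\rangle\rangle \cong \pi_1(S^3\times[0,1]) = 1$ since regluing $\nu(B)\cong B\times D^2$ kills exactly the normal closure of $\mu_B$. So $\pi_1(Y)=1$ in two stages, and with your Mayer--Vietoris computation $Y$ is an h-cobordism; for the Freedman input one caps the two $S^3$ ends with $4$--balls to get a homotopy $4$--sphere, homeomorphic to $S^4$, and removes the balls again using the $4$--dimensional annulus theorem (or cites Freedman--Quinn directly). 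Your other two worries are non-issues: the framing that is the $0$--framing at $b_0$ automatically restricts to the $0$--framing at $b_1$, since linking numbers are computed by intersections with the pushoff annulus in $X$; and critical points of $B\to[0,1]$ are irrelevant because only a tubular neighborhood of $B$, not a product structure, is used (in the topological case such normal bundles exist for locally flat annuli by Freedman--Quinn). With these points filled in, your handling of the three cases (smooth, topological, exotic input, the last run inside the smooth manifold homeomorphic to $S^3\times[0,1]$) is correct.
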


In the above formulation, the Hopf link corresponds to the identity function on $\mathcal{C}^\text{ex}$, and therefore, a 2--component link can be seen to be distinct from the Hopf link in smooth concordance if it induces a non-identity satellite operator on $\mathcal{C}^\text{ex}$. Similarly, the Hopf link with a knot $J$ tied into the first component corresponds to the connected-sum operator $C_J$ (where $C_J(K)=J\#K$ for all knots $K$), and therefore, a 2--component link can be seen to be distinct in smooth concordance from all links obtained from the Hopf link by tying a knot into the first component if the induced satellite operator is distinct from that of all connected-sum operators. 

This strategy can be considered to be a generalization of the method of distinguishing links by Dehn filling one component of the link or `blowing down one component', which can be seen to be the same as performing a twisted satellite operation using the unknot as companion (see, for example, \cite{ChaKo99, ChaKimRubStr12}, and item (7) of the highly useful list provided in \cite[Section 1]{FriedlPow14}).

For a winding number one pattern $P$ inside $ST=S^1\times D^2$ the standard unknotted solid torus, we let $\eta(P)$ denote the meridian $\{1\}\times \partial D^2$, oriented so that $\lk(P,\eta(P))=1$. Then it is easy to see that $P$ is the pattern corresponding to the link $(P,\eta(P))$. Moreover, patterns form a monoid (see~\cite[Section 2.1]{DR13}), and so we can consider the iterated patterns $P^i$, where $P^i(K)=P(P(\cdots(K)\cdots))$. $P^0$ is the core of an unknotted standard solid torus and we see that for any pattern $P$, the link $(P^0,\eta(P^0))$ is the Hopf link. 

\begin{figure}[b]
    \centering
		\includegraphics[width=3.5in]{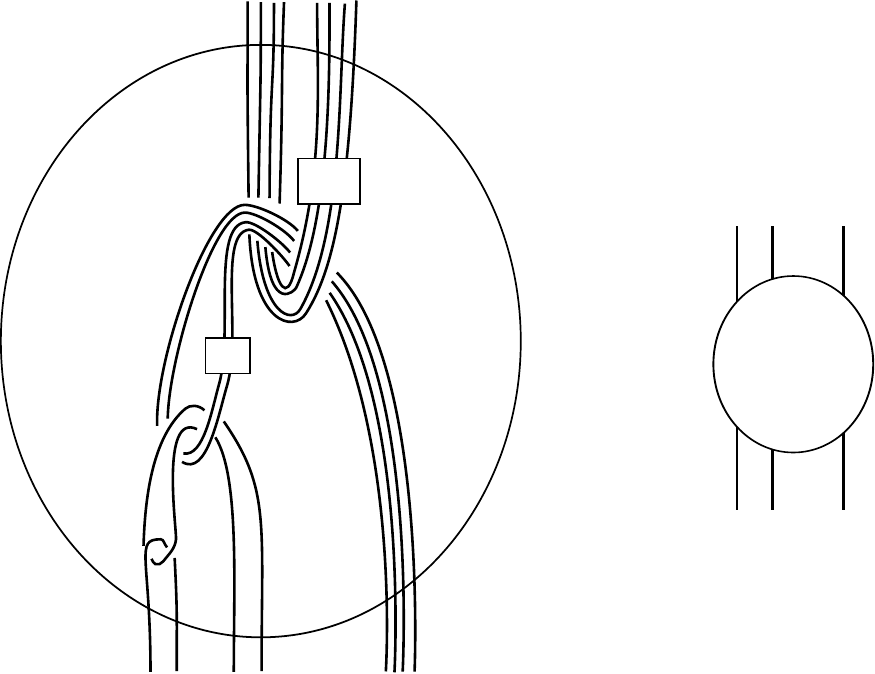}
    \put(-2.65,1.235){\small -2}
		\put(-2.315,1.925){ $-2$}
		\put(-0.55,1.15){\LARGE $\Wh_3$}
		\put(-1.2,1.15){\LARGE $=$}
		\put(-0.35,1.65){$\cdots$}
    \caption{Whenever we draw a circle containing a `$\Wh_3$', such as on the right, we mean the tangle shown on the left. The boxes containing `$-2$' indicate that all the strands passing vertically through the box should be given two full negative twists. As a result, the link in Fig.~\ref{fig:wh3link} is the link obtained by Whitehead doubling both components of the Whitehead link. }\label{fig:wh3}
\end{figure}	
							
Consider the link obtained by Whitehead doubling each component of the Whitehead link. By the symmetry of the link, we see that this is the same link as the one obtained by Whitehead doubling one component of the Hopf link three times (see Figs.~\ref{fig:wh3} and \ref{fig:wh3link}). For the rest of the paper, $L\equiv (Q,\eta)$ will refer to the link shown in Figure~\ref{fig:patternlink}, and $Q$ will denote the corresponding pattern or satellite operator.

\begin{figure}[t]
\centering
\includegraphics[width=2in]{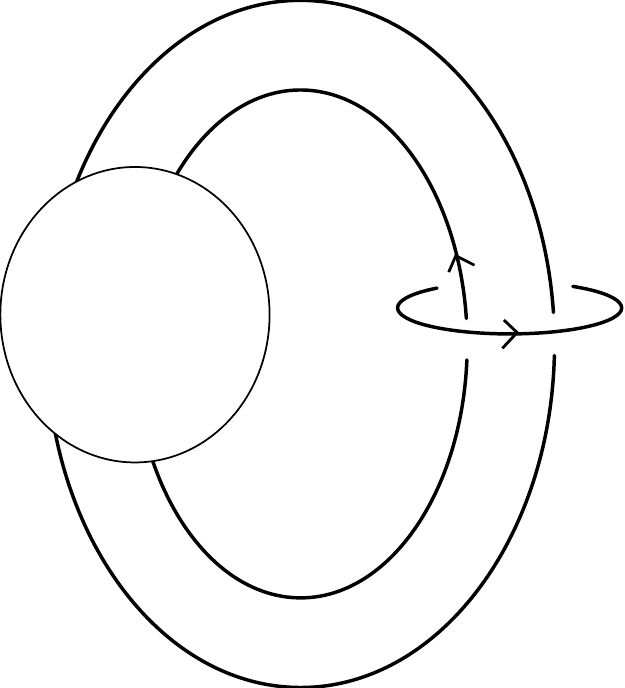}
                \put(-1.8,1.1){\LARGE $\Wh_3$}
		\put(-1.1, 2){\Large $\vdots$}
                \caption{$\Wh_3$, the link obtained by Whitehead doubling both components of the Whitehead link, or alternatively Whitehead doubling one of the components of the Hopf link three times. }\label{fig:wh3link}
\end{figure}

\begin{figure}[t]
		\centering
		\includegraphics[width=2in]{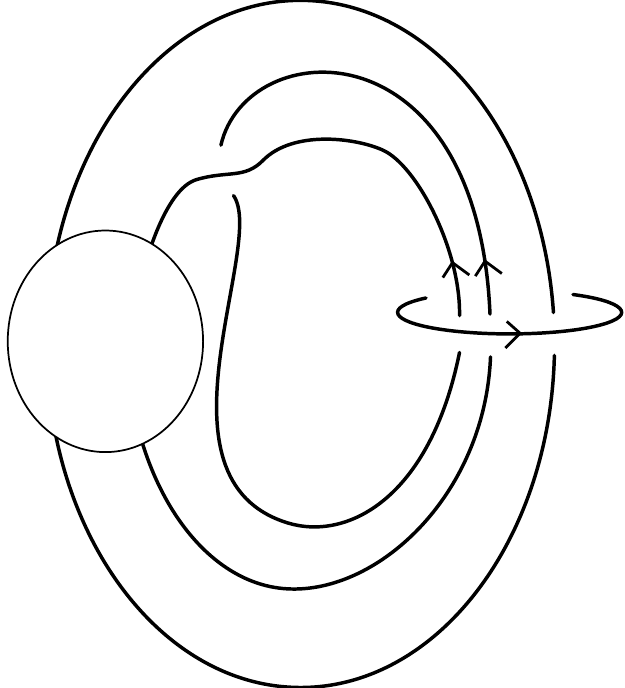}
    \put(-1.9,1.025){\LARGE $\Wh_3$}
		\put(-1.1, 2.03){\Large $\vdots$}
		\put(0, 1.1){$\eta$}
		\put(-1.85, 1.75){$Q$}
    \caption{The link $L\equiv (Q,\eta)$.}\label{fig:patternlink}
\end{figure}

\begin{theorem}\label{thm:main}The links $\{(Q^i,\eta(Q^i))\}$ are each topologically concordant to the Hopf link, but are distinct from the Hopf link (and one another) in smooth concordance. Moreover, they are distinct in smooth concordance from each link obtained from the Hopf link by tying a knot into the first component. For $i\geq 4$, they are distinct in smooth concordance from the Cha--Kim--Ruberman--Strle examples. \end{theorem}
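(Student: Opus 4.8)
The entire argument will be organized around Proposition~\ref{prop:diff_functions}: the Hopf link, the link $(Q^i,\eta(Q^i))$, and the links obtained by tying a knot $J$ into the first component correspond respectively to the identity operator, the operator $Q^i$, and the connected-sum operators $C_J$ on $\mathcal{C}^{\text{ex}}$. Thus for every negative (smooth) statement it suffices to separate these operators by evaluating them on a single well-chosen companion, while the one positive (topological) statement must be produced by hand. The plan is therefore to prove topological concordance directly and to obstruct each smooth concordance by computing a slice-genus-type invariant.

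For the topological statement I would exploit that $Q$ is assembled entirely from Whitehead doubling, so that the relevant patterns have trivial Alexander polynomial. By Freedman's theorem the doubled pattern is topologically concordant to the core of the solid torus $ST$ rel the meridian $\eta$; relativizing Freedman's topological slice disks in this way upgrades to a topological concordance of the link $(Q,\eta)$ to $(Q^0,\eta)$, which is the Hopf link. Iterating across the $i$ applications of $Q$ (equivalently, noting that the pattern for $Q^i$ is obtained from that for $Q^{i-1}$ by one further topologically trivial doubling) then shows $(Q^i,\eta(Q^i))$ is topologically concordant to the Hopf link for every $i$.

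For the smooth statements I would use a Legendrian slice–Bennequin obstruction. Since $Q$ has winding number one and $\widehat Q$ is unknotted, the unknot is fixed, so $Q^i(U)=U$ and the unknot is useless here; instead I would pick a companion $K_0$ with $\tau(K_0)>0$ and build a Legendrian representative of $Q^i(K_0)$ by iterating the Legendrian-satellite construction, tracking $\tb$ and $\rot$ at each stage. Applying $\tb+|\rot|\le 2\tau-1$ should give a lower bound of the form $\tau\bigl(Q^i(K_0)\bigr)\ge \tau(K_0)+i$, and a matching upper bound from the Seifert genus of the iterated pattern would pin the value, so that $\tau\bigl(Q^i(K_0)\bigr)$ is strictly increasing in $i$ (the Rasmussen invariant $\s$ would serve equally well). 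This at once yields smooth distinctness from the Hopf link (for $i\ge 1$ one has $\tau(Q^i(K_0))>\tau(Q^0(K_0))$) and from one another ($i\ne j$ forces $Q^i(K_0)\not\sim Q^j(K_0)$), hence by Proposition~\ref{prop:diff_functions} smooth non-concordance of the links. To rule out the connected-sum operators, observe that if $Q^i=C_J$ on $\mathcal{C}^{\text{ex}}$ then evaluating at the unknot gives $J=Q^i(U)=U$, so $C_J=C_U=\mathrm{id}$, contradicting $Q^i\ne\mathrm{id}$. Finally, to separate our examples from the Cha–Kim–Ruberman–Strle links I would compute the same invariant on their operators applied to $K_0$, show it is bounded independently of their parameter, and observe that the linear growth $\tau(Q^i(K_0))=\tau(K_0)+i$ overtakes this bound precisely at $i=4$.

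The main obstacle is the Legendrian computation itself: producing explicit Legendrian representatives of the iterated satellites $Q^i(K_0)$ whose $\tb+|\rot|$ realizes the claimed lower bound, and securing an upper bound so that the value is exact rather than merely monotone. Controlling the negative twists in the $\Wh_3$ tangle while keeping the clasp contributions positive enough for the bound to grow under iteration is the delicate point, as is extracting a bound on the Cha–Kim–Ruberman–Strle invariant sharp enough to produce the clean threshold $i\ge 4$.
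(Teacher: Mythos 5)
Your proposal follows the paper's architecture almost step for step: reduce all the smooth statements to satellite operators via Proposition~\ref{prop:diff_functions}; obtain topological concordance from Freedman's sliceness of $\Wh_3$ and iterate (the paper resolves one crossing of $Q$ to get a pair-of-pants-plus-annulus cobordism down to $\Wh_3$ together with a meridian, caps with the topological slice disks, and then handles $Q^i$ by the cut-and-reglue argument that makes satellite operators well defined on concordance); dispose of the connected-sum operators by evaluating at the unknot, exactly as you do; and separate the family from the Cha--Kim--Ruberman--Strle links by pitting the growth $\tau(Q^i(RHT))\geq i+1$ --- obtained, as you propose, from the iterated Legendrian satellite construction (additivity of $\tb$ and $\rot$ for winding number one, then Plamenevskaya's bound $\tb+\lvert\rot\rvert\leq 2\tau-1$) --- against a $J$-independent bound on their side. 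One caution on that last point: the $J$-independent bound is not something you can extract pattern-by-pattern; the paper gets $-2\leq\tau(L_J(RHT))\leq 4$ from the satellite inequality of \cite{Rob12}, using only that $n_+(L_J)=2$, the winding number is one, and erasing $\eta$ leaves an unknot for every $J$. You would need this theorem or an equivalent tool, and the threshold $i\geq 4$ is exactly where $i+1$ exceeds $4$.

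The one genuine divergence is how you separate the iterates from one another, and this is where your proposal has a gap. You want the exact value $\tau(Q^i(K_0))=\tau(K_0)+i$, and you correctly note that Plamenevskaya gives only the lower bound; the ``matching upper bound from the Seifert genus of the iterated pattern'' is asserted, not proved, and establishing $g_3(Q^i(RHT))\leq i+1$ would require building genus-controlled relative Seifert surfaces for $Q$ in the solid torus and keeping control under iteration --- real work your sketch does not do. The paper sidesteps this entirely by quoting Proposition~\ref{prop:distinctiterates} from \cite{Ray15}, whose hypotheses are verified by a single explicit Legendrian diagram with $\tb(\mathcal{Q})=2$ and $\rot(\mathcal{Q})=0$. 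Alternatively, your lower bound alone already suffices and no upper bound is needed: if $(Q^i,\eta(Q^i))$ were smoothly concordant to $(Q^j,\eta(Q^j))$ with $i<j$, then $Q^i(K)$ and $Q^j(K)$ are exotically concordant for every $K$, and since $Q$ induces a well-defined operator on $\mathcal{C}^{\text{ex}}$ this propagates to $Q^{i+m(j-i)}(K)$ being exotically concordant to $Q^{i+(m+1)(j-i)}(K)$ for all $m$; hence $\tau(Q^i(RHT))=\tau\bigl(Q^{i+m(j-i)}(RHT)\bigr)\geq i+m(j-i)+1$ for all $m$, a contradiction (this uses that $\tau$ obstructs exotic concordance, an assumption the paper's argument also makes). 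Either repair closes the gap; as written, the exact-value step is the missing piece.
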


\subsection*{Acknowledgments} While the authors were already in the initial stages of this project, the idea was independently suggested to the second author by an anonymous referee for~\cite{Ray15}. 
\section{Proofs}

The results of this section comprise Theorem~\ref{thm:main}.

\begin{proposition}\label{prop:topconc1}The 2--component link $(Q,\eta(Q))$ shown in Fig.~\ref{fig:patternlink} is topologically concordant to the Hopf link. \end{proposition}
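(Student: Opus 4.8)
The plan is to reduce the statement to a question about a single component, and then to apply Freedman's theorem. Since the meridian $\eta = \eta(Q)$ is an unknot that can be kept fixed throughout, it suffices to concord only the first component. Concretely, writing $ST = S^3 \setminus \nu(\eta)$ for the solid torus in which the winding-number-one pattern $Q$ sits, I would first argue that $(Q,\eta(Q))$ is topologically concordant to the Hopf link $(Q^0,\eta(Q^0))$ as soon as $Q$ is topologically concordant to the core $c = Q^0$ of $ST$ through a properly embedded topological annulus in $ST \times [0,1]$. Indeed, such an annulus, together with the product annulus of $\eta$ pushed into the complementary solid torus cross $[0,1]$, assembles into a topological concordance of links, since the two annuli are automatically disjoint. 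This turns the proposition into the assertion that the pattern $Q$ is topologically concordant, inside the solid torus, to the trivial (core) pattern; note that the linking-number-one condition, matching the Hopf link, is already built into the winding-number-one hypothesis on $Q$.

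Next I would exploit the description of $Q$ in terms of the Whitehead doubling operations recorded by the tangle $\Wh_3$ of Fig.~\ref{fig:wh3}. Whitehead doubling replaces a curve by a satellite with the Whitehead pattern, introducing a clasp that is controlled by the $-2$ twist boxes. There is an evident singular (immersed) annulus in $ST \times [0,1]$ from $Q$ to the core $c$ obtained by undoing each clasp; equivalently, $Q$ and $c$ cobound a map of an annulus whose only failures of embeddedness are localized at these clasps. The essential algebraic input is that the Whitehead double has trivial Alexander polynomial, so that the knots and links governing the clasps are algebraically trivial and the corresponding Blanchfield-type obstruction to an embedded annulus vanishes; one also checks that $Q$ applied to the unknot is again the unknot, so that the first component is unknotted exactly as in the Hopf link.

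The crux, and the main obstacle, is to convert this algebraic triviality into an honest topological concordance, which I would do via Freedman's disk embedding theorem. The fundamental groups arising in the relevant exterior are abelian (infinite cyclic, normally generated by the meridian), hence good in the sense of Freedman--Quinn, so the immersed annulus can be upgraded to a properly embedded topological annulus once the clasp-capping disks are replaced by embedded topological disks. Carrying this out requires identifying the correct ambient $4$--manifold, verifying that its fundamental group is good and that the vanishing Alexander data kills the relevant intersection and self-intersection obstructions, and keeping the construction standard near $\eta \times [0,1]$ so that the winding-number-one and meridian data are preserved. An alternative route for this last step, which I would keep in reserve, is to compute directly that the (multivariable) Alexander module of $(Q,\eta(Q))$ agrees with that of the Hopf link and then invoke the surgery-theoretic classification of links topologically concordant to the Hopf link in the good-group setting; in either case, producing the embedded annulus yields the desired topological concordance from $(Q,\eta(Q))$ to the Hopf link.
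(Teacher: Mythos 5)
There is a genuine gap at what you yourself identify as the crux. Your main route --- build an immersed annulus in $ST \times [0,1]$ by undoing the clasps, then upgrade it to an embedded annulus via the disk embedding theorem --- defers exactly the hard part and asserts hypotheses that are not verified and, as stated, are not true. The fundamental group of the complement of a generically immersed annulus is not ``abelian (infinite cyclic)'' in general; it is normally generated by a meridian, but normal generation by one element does not make a group abelian, and goodness of the \emph{ambient} group is of no help until you have actually constructed a $4$--manifold with small $\pi_1$ in which the Whitney/cap disks live. Producing such a manifold and killing the intersection and self-intersection obstructions there is precisely the content of Freedman-type theorems (Freedman's Alexander-polynomial-one theorem, or its solid-torus/link analogue), so your sketch is circular: it invokes the conclusion of the machinery as if it were a routine verification. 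Moreover, the algebraic input you cite --- that Whitehead doubles have trivial (single-variable) Alexander polynomial --- is not the right invariant here; what must vanish is the obstruction coming from the \emph{multivariable} Alexander module of the $2$--component link $(Q,\eta(Q))$, which indeed turns out to be trivial (the paper computes the presentation matrix via a C--complex and finds $\det = t_1t_2$, a unit), but this has to be computed, not inferred from the one-variable statement. Your reserve route --- compute the multivariable Alexander module and invoke the surgery-theoretic classification --- is sound and is exactly the paper's Remark~\ref{rem:davis}, which appeals to~\cite{Davis06}; however, as written you neither perform the computation nor cite the classification precisely, so even that route is a plan rather than a proof. The fastest repair is to carry out the C--complex computation and quote Davis's theorem that linking-number-one links with Alexander polynomial one are topologically concordant to the Hopf link.

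It is also worth noting that the paper's actual proof avoids all disk-embedding work by a trick you did not find: resolving a single crossing of $Q$ (a saddle move) yields a three-component link $(a,b,c)$ in which $b \sqcup c$ is the link $\Wh_3$ of Fig.~\ref{fig:wh3link}, already known to be topologically slice by Freedman~\cite{Freed88}, while $a \sqcup b$ is a Hopf link. The saddle gives a pair of pants on the first component and an annulus on the second; capping $c$ with its slice disk, converting the slice disk of $b$ into an annulus to an unknot $U$ by deleting a point, and running a parallel annulus from the meridian $a$ to a meridian of $U$ inside a tubular neighborhood assembles the desired concordance to the Hopf link. This uses Freedman's published sliceness result purely as a black box, which is exactly the kind of reduction your outline is missing: rather than re-running the embedding machinery, find a cobordism that hands the hard embedded-disk problem to a theorem already in the literature.
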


\begin{figure}[t]
\centering
\includegraphics[width=2in]{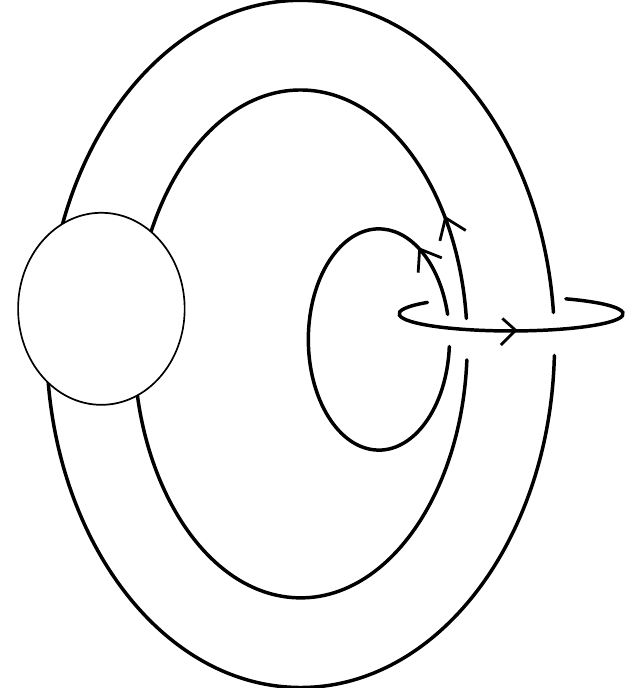}
\put(-1.9,1.125){\LARGE $\Wh_3$}
\put(-1.1, 2){\Large $\vdots$}
\put(-1.1,1.125){$a$}
\put(0,1.125){$b$}
\put(-1.875,0.5){$c$}
\caption{Proof of Theorem \ref{prop:topconc1}}\label{fig:proof1}
\end{figure}

\begin{proof} 
Let $L$ denote the link $(Q, \eta(Q))$.
By resolving a single crossing we get the link $(a, b, c)$ shown in Fig.~\ref{fig:wh3link}.  Thus, there is a cobordism from $(Q, \eta(Q))$ to $(a,b,c)$ consisting of an annulus $A$ bounded by $\eta(Q)\sqcup b$ and a pair of pants $P$ bounded by $Q\sqcup a \sqcup c$.  

Freedman proved that the link depicted in Fig.~\ref{fig:wh3link}, sometimes referred to as $\Wh_3$, is topologically slice in~\cite{Freed88}. We label the different components of Figure~\ref{fig:proof1} as $a$, $b$, and $c$ as shown. Note that $b\sqcup c$ is $\Wh_3$, the link shown in Fig.~\ref{fig:wh3link}, and $a\sqcup b$ is the Hopf link. Let $\Delta_b$, $\Delta_c\subseteq B^4$ be the disjoint slice disks for $b$ and $c$. By removing a regular neighborhood of a point on $\Delta_b$ we see that $c$ and its topological slice disk $\Delta_c$ in $S^3\times [0,1]$ is disjoint from a regular neighborhood of an annulus cobounded by $b$ in $S^3\times \{0\}$ and an unknot $U\subseteq S^3\times \{1\}$. Since $a$ is just a meridian of $b$, we can find an annulus entirely within this regular neighborhood, cobounded by $a\subset S^3\times \{0\}$ and a meridian of $U\subseteq S^3\times\{1\}$. 
Gluing these to $A\sqcup P$ gives a concordance between $L$ and the Hopf link.  
\end{proof}

\begin{remark}\label{rem:davis}
An alternative approach to Proposition~\ref{prop:topconc1} was suggested to the authors by Jim Davis, namely that if the multivariable Alexander polynomial of $L = (Q, \eta(Q))$ is one then $L$ is topologically concordant to the Hopf link by~\cite{Davis06}. We performed the computation using tools developed in ~\cite{Cooper79, CimFlo}, which we describe here. Given any link $L$ one can find a 2--complex $F$ called a \textit{C--complex} bounded by $L$ (see Figure~\ref{fig:C-complex}). Similar to the Seifert matrix one can generate a matrix by studying linking numbers between curves on $F$ and their pushoffs.  In \cite[Corollary 3.4]{CimFlo} and \cite[Chapter 2, Corollary 2.2]{Cooper79} it is shown that this matrix gives a presentation for the Alexander module of $L$. With respect to the C--complex $F$ and basis for $H_1(F)$ in Figure~\ref{fig:C-complex} that matrix is
$$
A = \left[\begin{array}{cccc}
0&1&0&0\\t_1&0&0&0\\0&0&0&1\\0&0&t_2&t_2-1\end{array}\right].
$$ Since  $\det(A) = t_1t_2$, the Alexander module of $L$ is trivial, and so by~\cite{Davis06} $L$ is topologically concordant to the Hopf link.  

\begin{figure}[t]
\centering
\includegraphics[width=2.5in]{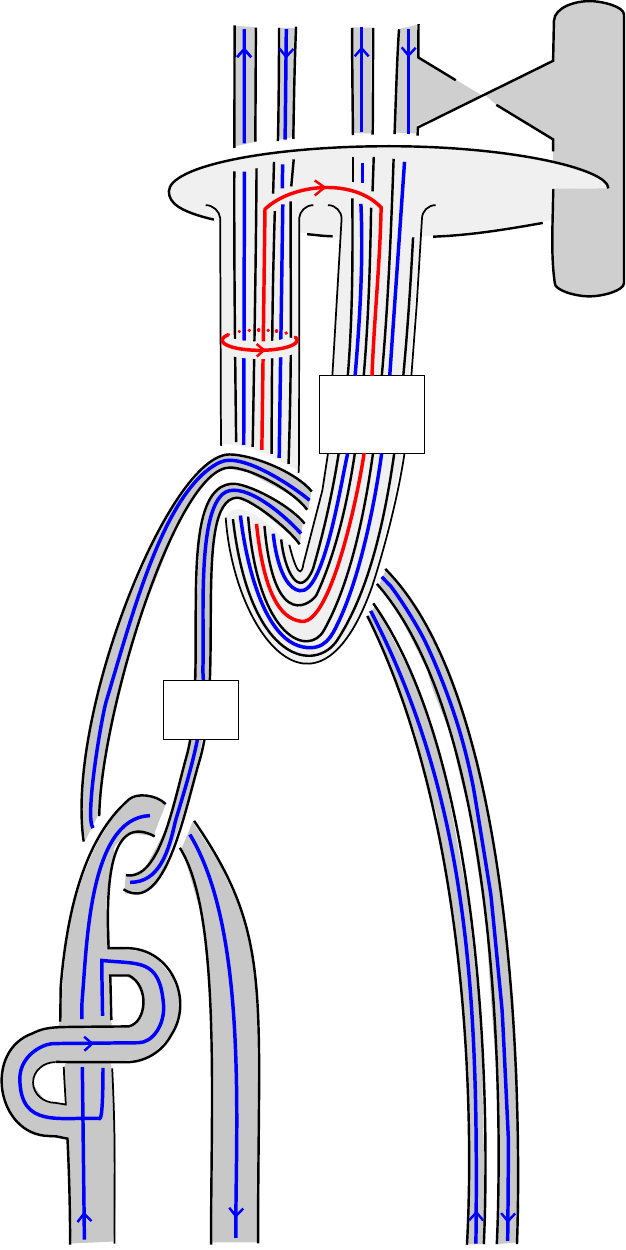}
    \put(-1.8,2.1){$-2$}
		\put(-1.175,3.275){\Large $-2$}
    \put(-.6,4.2){$F_1$}
    \put(-.24,4.6){$F_2$}
    \put(-1.28,4.3){\small{$\alpha_1$}}
    \put(-1.8,3.6){\small{$\alpha_2$}}
    \put(-1.45,.5){\small{$\beta_1$}}
    \put(-2.65,.6){\small{$\beta_1$}}
\caption{A C--complex $F=F_1\cup F_2$  for $L = (Q,\eta(Q))$; the four basis curves for $H_1$ are shown. }\label{fig:C-complex}
\end{figure}

This result, along with Theorem~\ref{thm:main}, shows that our links give another answer to the question posed by Jim Davis in~\cite[p.\ 266]{Davis06}, as did the examples of Cha--Kim--Ruberman--Strle. \end{remark}


\begin{proposition}\label{prop:topconc2}[See also Proposition 2.15 of \cite{DR13}] Each link of the form $(Q^i, \eta(Q^i))$, $i\geq 1$ is topologically concordant to the Hopf link.\end{proposition}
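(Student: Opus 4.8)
The plan is to prove the statement by induction on $i$, the base case $i=1$ being Proposition~\ref{prop:topconc1}. The engine of the induction is the principle that the satellite construction carries topologically concordant links to topologically concordant links. First I would record the following reformulation. Because $Q$ is a winding number one pattern with meridian $\eta(Q)$, the link $(Q^i,\eta(Q^i))$ is obtained from $(Q,\eta(Q))$ by fixing the (unknotted) second component and replacing the first component $Q$, inside the complementary solid torus $ST=S^3\setminus N(\eta(Q))$, by the pattern $Q^{i-1}$ inserted along $Q$; the resulting pattern evaluates to $Q(Q^{i-1}(K))=Q^i(K)$, so this really is $(Q^i,\eta(Q^i))$. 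Performing the identical operation on the Hopf link $(Q^0,\eta(Q^0))$ instead produces $(Q^{i-1},\eta(Q^{i-1}))$, since $Q^0$ is the core of the solid torus and inserting $Q^{i-1}$ along the core returns $Q^{i-1}$. Hence it suffices to show that this satellite operation preserves topological concordance (which I abbreviate $\simeq$): combined with $(Q,\eta(Q))\simeq(Q^0,\eta(Q^0))$ from Proposition~\ref{prop:topconc1}, it would give $(Q^i,\eta(Q^i))\simeq(Q^{i-1},\eta(Q^{i-1}))$, and the inductive hypothesis then completes the argument.

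To prove that the satellite operation preserves topological concordance, I would start from a topological concordance from $(Q,\eta(Q))$ to the Hopf link, which is a pair of disjoint, properly embedded, locally flat annuli $A_1\sqcup A_2\subset S^3\times[0,1]$ with $A_1$ cobounded by the first components and $A_2$ by the second components. I would then carry out the satellite construction fiberwise along $A_1$: choose a locally flat tubular neighborhood $N(A_1)\cong A_1\times D^2$, which exists by the normal bundle theory for locally flat surfaces in $4$--manifolds, framed so that at each end it restricts to the solid torus structure defining the patterns $Q$ and $Q^0$. Inside $N(A_1)$ I replace the core annulus by the annulus swept out by a fixed representative of $Q^{i-1}$ in the standard solid torus. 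Taking $N(A_1)$ small enough to be disjoint from $A_2$, the resulting surface $A_1'\sqcup A_2$ is again a pair of disjoint, properly embedded, locally flat annuli, and it realizes a topological concordance from $(Q^i,\eta(Q^i))$ to $(Q^{i-1},\eta(Q^{i-1}))$, exactly as required.

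The step I expect to be the main obstacle is the compatibility of framings along $A_1$. The insertion of $Q^{i-1}$ depends on a trivialization of the normal bundle of $A_1$, and I must check that the trivialization restricting to the correct (satellite) framing at the $S^3\times\{0\}$ end also restricts to the correct framing at the $S^3\times\{1\}$ end. Since $A_1$ is an annulus, i.e.\ a genus zero cobordism, and both ends carry the $0$--framing determined by the winding number one solid torus structure (equivalently, the framing whose pushoff has vanishing linking with $A_2$), the framing extends consistently over $A_1$, so the two induced end framings are precisely the ones needed to recover $(Q^i,\eta(Q^i))$ and $(Q^{i-1},\eta(Q^{i-1}))$. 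I would also note that remaining in the topological category requires only that locally flat annuli admit locally flat tubular neighborhoods, which holds, so the construction never leaves the locally flat setting. This argument is essentially that of Proposition 2.15 of~\cite{DR13}, to which I would refer for the detailed verification of framings.
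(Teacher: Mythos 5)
Your proposal is correct and takes essentially the same approach as the paper: the paper likewise cuts out a regular neighborhood of the concordance annulus $A_1$ and glues in $ST\times[0,1]$ containing a pattern, the only (immaterial) difference being that the paper repeatedly inserts $Q$ itself into each newly constructed concordance to get $(Q^{i+1},\eta(Q^{i+1}))\simeq(Q^i,\eta(Q^i))$, while you insert $Q^{i-1}$ once into the original concordance and then induct. One small slip in a parenthetical: the relevant $0$--framing is the one whose pushoff has vanishing linking with the ends of $A_1$ itself (the Seifert framing), not with $A_2$ -- but your framing-extension argument is otherwise correct, and the paper leaves this point implicit in any case.
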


\begin{proof}This is essentially the proof that satellite operators are well-defined on concordance classes of knots. Since $(Q,\eta(Q))$ is topologically concordant to the Hopf link, we have two disjoint annuli $A_1$ and $A_2$ in $S^3\times [0,1]$, such that $A_1\cap S^3\times \{0\}=Q$, $A_2\cap S^3\times \{0\}=\eta(Q)$, and $(A_1 \sqcup A_2)\cap S^3\times \{1\}$ is the Hopf link. Cut out a regular neighborhood of $A_1$, and replace it with $ST\times[0,1]$, where $ST=S^3 - N(\eta)$ is a standard unknotted solid torus containing the pattern knot $Q$. The resulting manifold can be seen to be homeomorphic to $S^3\times [0,1]$. We obtain the link $(Q^2,\eta(Q^2))\subseteq S^3\times \{0\}$, the link $(Q,\eta(Q))\subseteq S^3\times \{1\}$, and a topological concordance between them in this new $S^3\times [0,1]$ given by $(Q\times [0,1])\sqcup A_2$. 

By iterating this process, we see that for each $i\geq 1$, the link $(Q^{i+1},\eta(Q^{i+1}))$ is topologically concordant to $(Q^i,\eta(Q^i))$. This completes the proof since, by Proposition \ref{prop:topconc1}, $(Q,\eta(Q))$ is topologically concordant to the Hopf link. \end{proof}

\begin{proposition}\label{prop:smoothlydistinct}The members of the family $\{(Q^i,\eta(Q^i))\mid i\geq 0\}$ are distinct from one another in smooth concordance. Moreover, for $i\geq 1$, they are each distinct in smooth concordance from any link obtained from the Hopf link by tying a knot in the first component. \end{proposition}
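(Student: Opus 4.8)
The plan is to prove smooth distinctness by exhibiting a concrete knot $K$ on which the satellite operators $Q^i$ act differently, and to detect this difference with a smooth concordance invariant that is additive (or at least subadditive with controlled defect) under the satellite operation. By Proposition~\ref{prop:diff_functions}, if two of our links $(Q^i,\eta(Q^i))$ and $(Q^j,\eta(Q^j))$ were smoothly concordant, then $Q^i(K)$ and $Q^j(K)$ would be exotically concordant for every knot $K$; so it suffices to produce a single $K$ for which $Q^i(K)$ and $Q^j(K)$ are \emph{not} exotically concordant. Since $Q$ is built from iterated (negatively clasped, appropriately twisted) Whitehead doubling, the natural invariant to use is the Ozsv\'ath--Szab\'o $\tau$ invariant (or Rasmussen's $s$), which is a smooth concordance invariant that vanishes on topologically slice knots, is insensitive to the homeomorphism-versus-diffeomorphism distinction in the sense needed for $\mathcal{C}^{\text{ex}}$, and most importantly obeys sharp satellite bounds for Whitehead-type patterns.

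First I would fix a companion knot $K$ with large positive $\tau$, for instance $K=T_{2,2n+1}$ a torus knot with $\tau(K)=n$ large; the point is to seed the iteration with enough positive slice genus that the negative-clasp doubling in $Q$ behaves predictably. Next I would establish the key recursion: because $Q$ is (up to the $-2$-framed twisting bookkeeping already recorded in Figs.~\ref{fig:wh3} and~\ref{fig:patternlink}) a composition of negatively-clasped twisted doubling operators, there is a crossing-change/genus argument showing that when the input has sufficiently positive $\tau$, each application of $Q$ strictly changes $\tau$ (by a definite amount determined by the clasp sign and the framing), so that $\tau(Q^{i}(K))$ is a strictly monotonic function of $i$. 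Concretely, one computes $\tau$ of a twisted Whitehead double via the standard result relating the double's $\tau$ to the sign of the clasp and the comparison of the twisting to $2\tau$ of the companion, and iterates. Because each $Q^i(K)$ is topologically slice (by Proposition~\ref{prop:topconc2} applied at the level of patterns, the links are topologically concordant and hence the satellites are topologically concordant, so $Q^i(K)$ and $Q^j(K)$ are topologically concordant), this strict monotonicity of a smooth invariant that is a genuine exotic-concordance invariant immediately separates all the $Q^i(K)$ in $\mathcal{C}^{\text{ex}}$, proving the first sentence.

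For the second sentence I would use the same invariant to rule out connected-sum operators. If $(Q^i,\eta(Q^i))$ were smoothly concordant to the Hopf link with a knot $J$ tied in the first component, then by Proposition~\ref{prop:diff_functions} we would have $Q^i(K)$ exotically concordant to $J\# K$ for \emph{every} knot $K$. I would exploit the defect between the genuinely nonlinear satellite behavior of $Q^i$ and the affine behavior $K\mapsto \tau(J)+\tau(K)$ forced by a connected-sum operator: evaluating on a one-parameter family of companions $K$ (say $T_{2,2n+1}$ for varying $n$) shows that $\tau(Q^i(K))-\tau(K)$ is \emph{not} constant in $K$, whereas $\tau(J\#K)-\tau(K)=\tau(J)$ is constant. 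This contradiction, holding for each fixed $i\geq 1$, rules out every connected-sum operator simultaneously.

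The main obstacle I anticipate is the precise $\tau$ (or $s$) computation for the iterated twisted double pattern $Q$ and the verification that the recursion is strictly monotone for \emph{every} stage $i$, not merely the first: one must check that the output $Q^{i}(K)$ retains large enough positive $\tau$ to keep the next doubling in the favorable regime where the satellite formula for $\tau$ of a twisted Whitehead double is sharp rather than ambiguous. Getting the framing conventions from the $-2$-twist boxes to line up with the sign of the clasp, and confirming that the relevant bound is an equality (so that monotonicity is strict) rather than merely an inequality, is the delicate computational heart of the argument; once that recursion is pinned down, both conclusions follow formally from Proposition~\ref{prop:diff_functions}.
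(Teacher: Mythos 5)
Your high-level skeleton (use Proposition~\ref{prop:diff_functions} to reduce to distinguishing the knots $Q^i(K)$ in $\mathcal{C}^{\text{ex}}$, then detect with $\tau$) matches the paper, but the engine of your argument has a genuine gap. You treat $Q$ as a composition of negatively-clasped twisted \emph{doubling} operators and propose to compute $\tau(Q^i(K))$ by iterating the standard formula for $\tau$ of twisted Whitehead doubles. That formula does not apply: $Q$ is a \emph{winding number one} pattern. Although the link $(Q,\eta(Q))$ is constructed by Whitehead doubling a component of the Hopf link, the satellite knot $Q(K)$ is not a Whitehead double of $K$ (doubles have winding number zero), and $Q^i$ is a composition in the monoid of winding-number-one patterns, not of doubling operators. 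Indeed your plan is internally inconsistent: iterated Whitehead doubles are genus-one knots, so their $\tau$ lies in $\{-1,0,1\}$ and could never be strictly monotonic in $i$. No exact formula for $\tau(Q^i(K))$ is known; the available tools give only inequalities -- the Legendrian satellite bound of \cite{Plam04} and \cite{Ng01} used later in the paper yields $\tau(Q^i(RHT))\geq i+1$, and \cite{Rob12} gives upper bounds with growing defect -- and a one-sided monotone lower bound does not separate $Q^i(K)$ from $Q^j(K)$ pairwise, which is exactly the ``delicate computational heart'' you flag but cannot supply by these means. (Also, your parenthetical that $\tau$ vanishes on \emph{topologically} slice knots is false -- it vanishes on smoothly slice knots, and its nonvanishing on topologically slice knots is what powers this whole subject; as it happens $Q^i(RHT)$ is topologically concordant to $RHT$, not topologically slice, so this slip is not load-bearing, but it signals the confusion above.)

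The paper sidesteps all computation of exact values: it quotes Proposition~\ref{prop:distinctiterates} from \cite{Ray15}, which says that a winding number one pattern $P$ with $P(U)$ unknotted admitting a Legendrian diagram $\mathcal{P}$ with $\tb(\mathcal{P})>0$ and $\tb(\mathcal{P})+\rot(\mathcal{P})\geq 2$ has all iterates distinct as functions on $\mathcal{C}^{\text{ex}}$; verifying the hypotheses is just the explicit diagram of Fig.~\ref{fig:legpattern} with $\tb(\mathcal{Q})=2$, $\rot(\mathcal{Q})=0$, together with $Q(U)=U$. Behind Ray's theorem sits the injectivity of such satellite operators on $\mathcal{C}^{\text{ex}}$ (via \cite{CDR14}), which reduces pairwise distinctness $Q^i(K)\not\sim Q^j(K)$ to $Q^{i-j}(K)\not\sim K$, plus Legendrian-satellite growth of $\tau$; the injectivity step is the ingredient your sketch is missing. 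For the second statement the paper is also more elementary than your non-constancy plan: evaluating at $K=U$, the unknottedness of $Q^i(U)$ forces $J$ to be exotically slice, so $C_J$ acts as the identity on $\mathcal{C}^{\text{ex}}$, and the claim reduces to the first statement with $j=0$. Your idea of showing $\tau(Q^i(K))-\tau(K)$ is non-constant is salvageable -- it equals $0$ at $K=U$ while the Legendrian bound gives $\tau(Q^i(T_{2,2n+1}))-\tau(T_{2,2n+1})\geq i\geq 1$ -- but only via those Legendrian estimates, not via the inapplicable doubling formula on which your recursion rests.
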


Recall that the link $(Q^0,\eta(Q^0))$ is the Hopf link, and therefore, the first statement above says that the links $(Q^i,\eta(Q^i))$ are distinct from the Hopf link in smooth concordance. 

\begin{proof}[Proof of Proposition~\ref{prop:smoothlydistinct}] For the first statement, consider the following proposition.

\begin{proposition}[\cite{Ray15}]\label{prop:distinctiterates}If $P$ is a winding number one pattern such that $P(U)$ is unknotted, where $U$ is the unknot, and $P$ has a Legendrian diagram $\mathcal{P}$ with $\tb(\mathcal{P}) > 0$ and $\tb(\mathcal{P}) + \rot(\mathcal{P}) \geq 2$, then the iterated patterns $P^i$ induce distinct functions on $\mathcal{C}^\text{ex}$, i.e.\ there exists a knot $K$ such that $P^i(K)$ is not exotically concordant to $P^j(K)$, for each pair of distinct $i,j\geq 0$. \end{proposition}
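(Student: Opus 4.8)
The plan is to obstruct exotic concordance with the Ozsv\'ath--Szab\'o concordance invariant $\tau$. The key point is that $\tau$ bounds the genus of smooth surfaces not only in $B^4$ but in any homotopy $4$--ball, so $\tau$ is invariant under exotic concordance and descends to $\mathcal{C}^{\text{ex}}$; moreover it satisfies the slice--Bennequin inequality $\tb(\mathcal{J})+\rot(\mathcal{J})\le 2\tau(J)-1$ for every Legendrian representative $\mathcal{J}$ of a knot $J$. I will fix a suitable seed knot $K$ and prove that $i\mapsto\tau(P^i(K))$ is strictly increasing; since $\tau$ is an exotic--concordance invariant, this shows at once that $P^i(K)$ and $P^j(K)$ are not exotically concordant whenever $i\ne j$.

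First I would record how the classical invariants behave under the Legendrian satellite construction. Given the Legendrian diagram $\mathcal{P}$ of the winding number one pattern $P$ and a Legendrian representative $\mathcal{J}$ of a companion knot $J$, the Legendrian satellite yields a Legendrian representative $\mathcal{P}(\mathcal{J})$ of $P(J)$ whose classical invariants simply add, $\tb(\mathcal{P}(\mathcal{J}))=\tb(\mathcal{J})+\tb(\mathcal{P})$ and $\rot(\mathcal{P}(\mathcal{J}))=\rot(\mathcal{J})+\rot(\mathcal{P})$, the coefficients being $1$ precisely because the winding number is one. Starting from a Legendrian representative $\mathcal{K}$ of $K$ and iterating, I obtain a Legendrian representative of $P^i(K)$ with $\tb+\rot$ equal to $[\tb(\mathcal{K})+\rot(\mathcal{K})]+i\,[\tb(\mathcal{P})+\rot(\mathcal{P})]$. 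Setting $m:=\tfrac12(\tb(\mathcal{P})+\rot(\mathcal{P}))$, the hypothesis $\tb(\mathcal{P})+\rot(\mathcal{P})\ge 2$ makes $m$ a positive integer (it is an integer because $\tb+\rot$ is odd for any knot in $S^3$, which forces the pattern's contribution to be even). Feeding this representative into slice--Bennequin then gives the lower bound $\tau(P^i(K))\ge\tfrac12\big(1+\tb(\mathcal{K})+\rot(\mathcal{K})\big)+im$, which increases by at least one at each step.

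This lower bound by itself only shows $\tau(P^i(K))\to\infty$; it does not exclude repeated values, so the real content is to upgrade it to strict monotonicity. I would do this by pairing it with a matching upper bound $\tau(P(J))\le\tau(J)+m$, which together with the lower bound would pin down $\tau(P^i(K))=\tau(K)+im$ exactly. To arrange the upper bound I would choose the seed $K$ so that $\tau(K)=g_4(K)$ and so that $\mathcal{K}$ realizes slice--Bennequin sharply -- a positive torus knot is the natural candidate -- and then construct an explicit genus-$m$ cobordism from $J$ to $P(J)$, built from a slice surface for $J$ together with a surface for the pattern sitting inside the solid torus. Here the hypothesis $P(U)=U$ is used to control the pattern surface: capping off the companion with the unknot exhibits the pattern as bounding efficiently, and the positivity hypothesis $\tb(\mathcal{P})>0$ is what should force that surface to be genus--minimizing, so that the pattern contributes exactly $m$ to the $4$--genus.

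The hard part will be exactly this matching of the two bounds -- equivalently, showing that the easily built surface realizing the upper bound has genus equal to the Legendrian lower bound, so that slice--Bennequin is sharp for $P^i(K)$ at every stage and this sharpness propagates under the satellite operation. Once the equality $\tau(P^i(K))=\tau(K)+im$ with $m\ge 1$ is in hand, strict monotonicity, and hence the conclusion that the iterates $P^i(K)$ are pairwise distinct in $\mathcal{C}^{\text{ex}}$, is immediate.
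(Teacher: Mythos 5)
The lower-bound half of your plan is sound, and it is exactly the computation this paper itself carries out in proving Proposition~\ref{prop:distinctfromCKRS}: stabilize the pattern, take iterated Legendrian satellites, use additivity of the classical invariants (\cite[Remark 2.4]{Ng01}, \cite[Lemma 2.4]{Ray15}), and feed the result into Plamenevskaya's bound $\tb+\lvert\rot\rvert\leq 2\tau-1$; and you are right that $\tau$ obstructs exotic concordance, which is why $\mathcal{C}^\text{ex}$ is the right setting. One technical omission: additivity $\tb(\mathcal{P}(\mathcal{K}))=\tb(\mathcal{P})+\tb(\mathcal{K})$ for the \emph{untwisted} satellite requires $\tb=0$ for the companion diagram (otherwise the Legendrian satellite construction produces the $\tb$-twisted satellite), and likewise iterating $\mathcal{P}$ on itself requires first normalizing to $\tb(\mathcal{P}')=0$. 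This is fixable, since positive stabilization preserves $\tb+\rot$ -- the paper does precisely this, stabilizing $\mathcal{Q}$ to $\tb=0$, $\rot=2$ and using a $\tb=0$ representative of the trefoil -- but your write-up iterates with $\tb(\mathcal{P})>0$ as is, which does not compute invariants of $P^i(K)$.

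The genuine gap is the second half: the matching upper bound $\tau(P(J))\leq\tau(J)+m$ with $m=\tfrac12(\tb(\mathcal{P})+\rot(\mathcal{P}))$, and the resulting equality $\tau(P^i(K))=\tau(K)+im$. Legendrian data gives \emph{lower} bounds on $\tau$ and on genus-type invariants; nothing in the hypotheses produces a genus-$m$ cobordism from $J$ to $P(J)$ governed by $\tb+\rot$ of some chosen diagram. For the pattern $Q$ of this paper ($m=1$) you would need a genus-one cobordism from $K$ to $Q(K)$ for every $K$, and no construction is offered; the natural cobordism genus is controlled by the genus of the pattern in the solid torus, which need not equal $m$. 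You flag this step as ``the hard part,'' and it is exactly the part that cannot be carried out from the stated hypotheses; upper bounds of Roberts type \cite{Rob12} have slope $n_+ + w$ per iterate, which in general exceeds $m$, so monotonicity of $\tau$ along the iterates is not available this way. The proof in \cite{Ray15} avoids any upper bound entirely: it invokes the injectivity of (strong) winding number one satellite operators on $\mathcal{C}^\text{ex}$ from \cite{CDR14} -- here the hypotheses that $P$ has winding number one and $P(U)$ is unknotted enter -- so that $P^i(K)$ exotically concordant to $P^j(K)$ with $i>j$ forces $P^{i-j}(K)$ exotically concordant to $K$; then your lower bound at a single seed with sharp slice--Bennequin data (e.g.\ the right-handed trefoil, with $\tb=0$, $\rot=1$ and $\tau=1$) gives $\tau(P^{i-j}(K))\geq\tau(K)+(i-j)>\tau(K)$, a contradiction. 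Replacing your upper-bound program with this injectivity theorem repairs the argument.
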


A Legendrian diagram $\mathcal{Q}$ for $Q$ with $\tb(\mathcal{Q})=2$ and $\rot(\mathcal{Q})=0$ is shown in Fig.~\ref{fig:legpattern}. It is clear that $Q(U)$ is unknotted. The first statement then follows from Proposition~\ref{prop:diff_functions}.

\begin{figure}[t]
\centering
\includegraphics[width=3in]{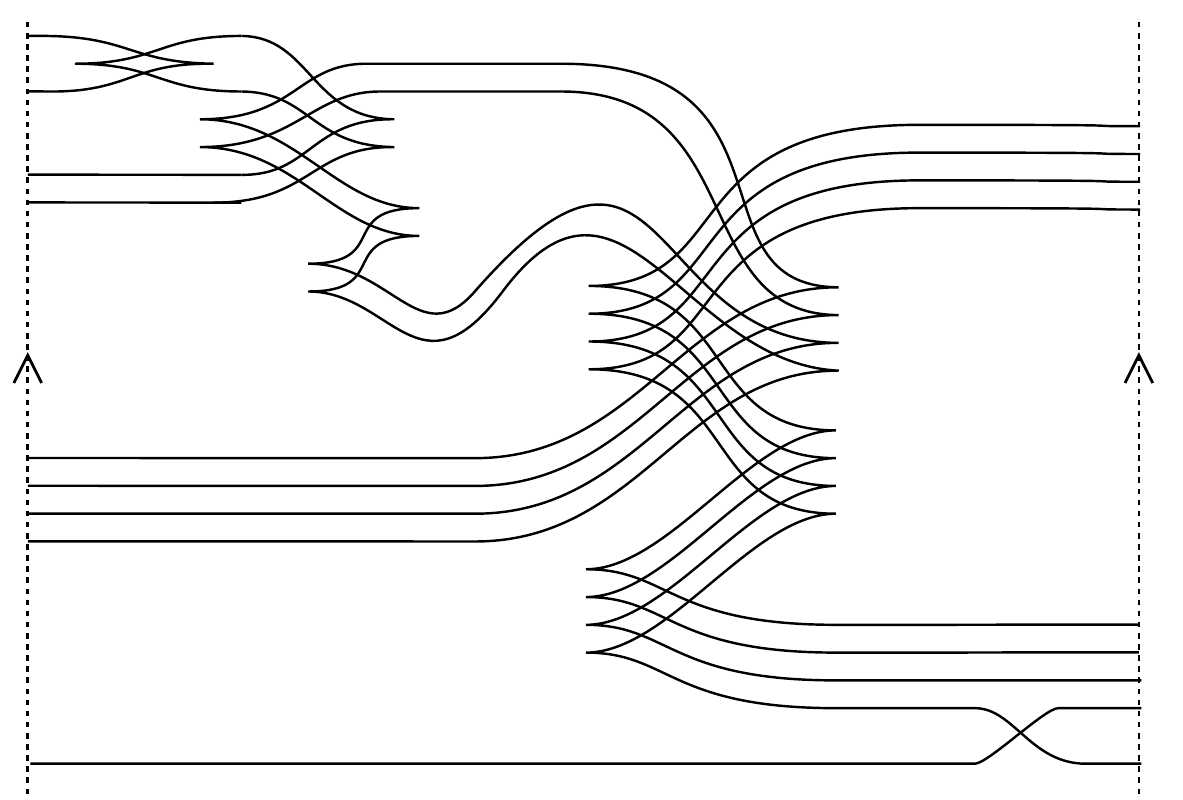}
\put(-2.25,-0.2){$\tb(\mathcal{Q})=2, \rot(\mathcal{Q})=0$}
\caption{A Legendrian diagram $\mathcal{Q}$ for the satellite operator $Q$. Note that this depicts a knot in a solid torus. }\label{fig:legpattern}
\end{figure}

If $(Q^i, \eta(Q^i))$ were concordant to a link obtained from the Hopf link by tying a knot $J$ into the first component, we know from Proposition~\ref{prop:diff_functions} that $Q^i(K)$ would be exotically concordant to $C_J(K)=J\#K$ for all knots $K$. By letting $K=U$ the unknot, since $Q^i(U)$ is unknotted, we see that $J$ must be exotically concordant to the unknot and as a result, $Q^i(K)$ is exotically concordant to $K$ for all knots $K$. But this contradicts Proposition~\ref{prop:distinctiterates} above, since $K=Q^0(K)$. \end{proof}

\begin{figure}[b]
\centering
\includegraphics[width=2in]{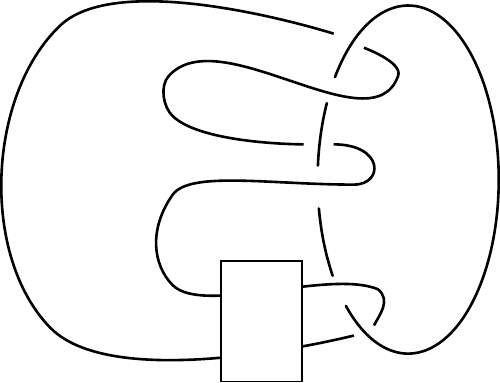}
\put(-1,0.2){$J$}
\caption{{The examples $\ell_J$ of Cha--Kim--Ruberman--Strle 
}}
\label{fig:oldexamples}
\end{figure}

The links constructed by Cha--Kim--Ruberman--Strle in \cite{ChaKimRubStr12} are of the form $\ell_J$ shown in Fig.~\ref{fig:oldexamples}. The box containing the letter $J$ indicates that all strands passing through the box should be tied into 0--framed parallels of a knot $J$. Cha--Kim--Ruberman--Strle showed that $\ell_J$ is topologically concordant to the Hopf link for all knots $J$, and that if $J$ is a knot with $\tau(J)>0$, $\ell_J$ is distinct from the Hopf link in smooth concordance. They also showed that if $J(n)=T(2,2n+1)$, the $(2,2n+1)$ torus knot, each member of the family $\{\ell_{J(n)}\}$ is smoothly distinct from the Hopf link (but topologically concordant to the Hopf link). 

\begin{figure}
\centering
\includegraphics[width=2in]{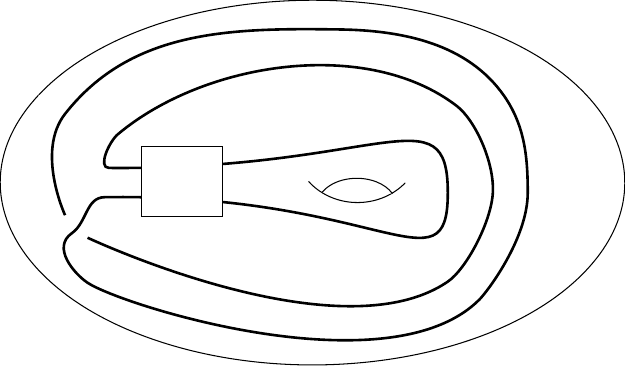}
\put(-1.475,0.55){$J$}
\caption{The patterns associated with the links $\ell_J$.}\label{fig:oldexamplespatterns}
\end{figure}

\begin{proposition}\label{prop:distinctfromCKRS} The links $\{(Q^i,\eta(Q^i))\mid i\geq 4\}$ are distinct in smooth concordance from the links $\ell_J$ constructed by Cha--Kim--Ruberman--Strle~\cite{ChaKimRubStr12}.\end{proposition}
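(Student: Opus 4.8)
The plan is to use the same strategy as in Proposition~\ref{prop:smoothlydistinct}: exploit Proposition~\ref{prop:diff_functions}, which says concordant links induce the same satellite operator on $\mathcal{C}^\text{ex}$. So if some $(Q^i,\eta(Q^i))$ with $i\geq 4$ were smoothly concordant to some $\ell_J$, then the operators $Q^i$ and the operator $R_J$ associated to $\ell_J$ (the patterns shown in Fig.~\ref{fig:oldexamplespatterns}) would agree on all exotic concordance classes, i.e.\ $Q^i(K)$ and $R_J(K)$ would be exotically concordant for every knot $K$. To derive a contradiction I need an invariant, computable on satellites, that distinguishes $Q^i(K)$ from $R_J(K)$ for a well-chosen companion $K$ and for every possible $J$ simultaneously.

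The natural invariant is the Ozsváth--Szabó $\tau$ (or equivalently the $\varepsilon$/Upsilon machinery), since it behaves well under the relevant satellite operators and is an invariant of exotic concordance. The key input is a formula bounding $\tau$ of a satellite in terms of the companion. For a winding number one pattern $P$ admitting a Legendrian diagram $\mathcal{P}$ with $\tb(\mathcal{P})+|\rot(\mathcal{P})|$ large, one has a lower bound of the form $\tau(P(K)) \geq$ (something growing in the twisting/writhe data) whenever $\tau(K)>0$, by the Legendrian/slice--Bennequin type arguments used in~\cite{Ray15} (the same source underlying Proposition~\ref{prop:distinctiterates}). Concretely, using the Legendrian diagram $\mathcal{Q}$ with $\tb=2$, $\rot=0$ from Fig.~\ref{fig:legpattern}, iterating gives a linear lower bound $\tau(Q^i(K)) \geq i\cdot(\text{const})$ for suitable $K$ with $\tau(K)>0$; so $\tau(Q^i(K))$ grows with $i$. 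The first step is therefore to record such a growth estimate for $\tau(Q^i(K))$.

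The second step is to \emph{bound above} $\tau(R_J(K))$ independently of $J$. The crucial structural feature of the Cha--Kim--Ruberman--Strle patterns (Fig.~\ref{fig:oldexamplespatterns}) is that the knot $J$ is tied in along a collection of strands running as $0$--framed parallels, i.e.\ $J$ enters only through a \emph{null-homologous} satellite-type operation inside the pattern. This means the pattern $R_J$ factors as a fixed winding-number-one pattern with an infection/tying-in of $J$ along a nullhomologous curve, and such infections change $\tau$ by a controlled amount. I would argue that $\tau(R_J(K)) - \tau(R_0(K))$ is bounded by a constant depending only on the geometry of the pattern (the number of parallel strands and the framing), uniformly in $J$; equivalently the winding number zero nature of the $J$--tying gives $|\tau(R_J(K))| \leq C$ for all $J$ once $K$ is fixed, since $R_J$ has a bounded $\tb+\rot$ realizing its winding-number-one behavior. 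Choosing $K$ with $\tau(K)>0$ and then taking $i\geq 4$ large enough that $\tau(Q^i(K)) > C$ gives the contradiction.

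\textbf{The main obstacle} will be making the upper bound on $\tau(R_J(K))$ genuinely uniform in $J$. Tying in a knot $J$ along many $0$--framed parallel strands is a winding-number-zero satellite, and while winding number zero operations are known to have bounded effect on $\tau$ (the satellite $\tau$ is squeezed between $\tau$ of the untied pattern plus or minus a fixed Seifert-genus-type constant coming from the clasp/band structure), I must confirm that this bound does not degrade as $J$ becomes arbitrarily complicated---in particular for the torus knots $T(2,2n+1)$ used to build the infinite CKRS family. The cleanest route is to exhibit, for the untied pattern, a Legendrian representative whose Thurston--Bennequin and rotation numbers cap $\tau(R_J(K))$ via a slice--Bennequin inequality that is insensitive to the $J$--tying (because the tying happens in a ball meeting the cobordism trivially), and to verify that the resulting constant $C$ is strictly smaller than the $i=4$ value of the lower bound from the first step. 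Pinning down that the threshold is exactly $i\geq 4$, rather than some larger index, is where the careful bookkeeping of the constants lives.
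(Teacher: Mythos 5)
Your global architecture is the same as the paper's: assume $(Q^i,\eta(Q^i))$ is smoothly concordant to some $\ell_J$, apply Proposition~\ref{prop:diff_functions} to conclude $Q^i(K)$ and $L_J(K)$ are exotically concordant for all $K$, and separate them with $\tau$, using a Legendrian lower bound on $\tau(Q^i(K))$ that grows linearly in $i$. That half is fine and matches the paper, which stabilizes $\mathcal{Q}$ to $\tb=0$, $\rot=2$, iterates the Legendrian satellite construction, applies it to a $\tb=0$ representative of the right-handed trefoil, and gets $\tau(Q^i(RHT))\geq i+1$ from \cite[Theorem 1]{Plam04}. The genuine gap is in your second step, which you yourself flag as the main obstacle and never close: you do not produce the uniform-in-$J$ upper bound $C$, and the route you call cleanest does not work as described. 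A slice--Bennequin-type inequality applied to a Legendrian diagram bounds $\tau$ from \emph{below}; to cap $\tau(L_J(K))$ you would have to apply it to the mirror, and there any Legendrian diagram must actually realize the tied-in strands of the (mirrored) $J$, so its $\tb$ and $\rot$ degrade with $J$ with no uniform control as $J$ ranges over, say, the torus knots $T(2,2n+1)$ used in the CKRS family. The remark that the tying "happens in a ball meeting the cobordism trivially" does not rescue this: on the upper-bound side there is no cobordism in play, only a diagram that must include $J$.

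What the paper does instead is invoke the satellite inequality of \cite[Theorem 1.2]{Rob12},
$$-n_+(L_J)-w \;\leq\; \tau(L_J(K)) - \tau(\widetilde{L_J}) - w\tau(K) \;\leq\; n_+(L_J)+w,$$
and observe that every term controlling the right-hand side is independent of $J$: the winding number is $w=1$, the geometric intersection number $n_+(L_J)=2$ with the meridian disk is unchanged by tying $J$ into the box, and $\widetilde{L_J}=L_J(U)$ is unknotted for every $J$, so $\tau(\widetilde{L_J})=0$. Taking $K=RHT$ gives $-2\leq\tau(L_J(RHT))\leq 4$ uniformly in $J$, against $\tau(Q^i(RHT))\geq i+1\geq 5$ for $i\geq 4$. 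Your structural intuition --- that $J$ enters only through a winding-number-zero tying and hence shifts $\tau$ by a bounded amount --- is in fact correct and could be made rigorous by the very same inequality (regard $J\mapsto L_J(K)$ as a satellite operation with companion $J$ and winding number zero, so $w=0$ gives a $J$-independent bound); but either way the missing ingredient is precisely such a satellite inequality for $\tau$, and without citing or proving one, your constant $C$ --- and with it the verification that the threshold is $i\geq 4$ --- never materializes.
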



\begin{proof}The Cha--Kim--Ruberman--Strle examples, as 2--component links with unknotted components, yield patterns as shown in Figure~\ref{fig:oldexamplespatterns}. Let $L_J$ denote the pattern knot obtained from the link $\ell_J$, and $L_J(K)$ denote the satellite knot obtained by applying $L_J$ to a knot $K$. From \cite[Theorem 1.2]{Rob12} we see that 
$$-n_+(L_J)-w \leq \tau(L_J(K)) - \tau(\widetilde{L_J}) - w\tau(K) \leq n_+(L_J) + w$$
 where $w$ is the winding number of $L_J$, $\widetilde{L_J} = L_J(U)$ is the result of erasing the second component of $\ell_J$, and $n_+(L_J)$ and $n_-(L_J)$ are the least number of positive and negative respectively intersections between $L_J$ and the meridian of the solid torus containing it. We see that $n_+(L_J)=2$, $\eta_-(L_J)=1$ and $w=1$. Since $\widetilde{L_J}$ is unknotted, $\tau(\widetilde{L_J})=0$. Therefore, if we let $K=RHT$ the right-handed trefoil, $$-2\leq \tau(L_J(RHT)) \leq 4,$$ since $\tau(RHT)=1$. Note that this does not depend on the choice of $J$. 

We will show that $\tau(Q^i(RHT)) >4$ for $i\geq 4$. By Proposition~\ref{prop:diff_functions}, this will complete the proof. Our main tool will be \cite[Theorem 1]{Plam04}, which states that if $\mathcal{K}$ is a Legendrian representative for a knot $K$, then 
$$\tb(\mathcal{K})+\lvert\rot(\mathcal{K})\rvert\leq 2\tau(K)-1.$$

We first build Legendrian representatives of the satellite knots $Q^i(RHT)$. We have a Legendrian diagram $\mathcal{Q}$ for the pattern $Q$ (Figure~\ref{fig:legpattern}.) We stabilize twice to get another Legendrian diagram $\mathcal{Q}'$ for $Q$ with $\tb(\mathcal{Q}')=0$ and $\rot(\mathcal{Q}')=2$. We can perform the Legendrian satellite operation on this Legendrian diagram by itself to get Legendrian diagrams $\mathcal{Q}'^i$ for the iterated patterns $Q^i$ since $\tb(\mathcal{Q}')=0$ (see \cite{Ng01} for background on the Legendrian satellite construction and \cite[Section 2.3]{Ray15} for details on this particular construction on patterns/satellite operators). By \cite[Lemma 2.4]{Ray15}, since $Q$ has winding number one, we see that $$\tb(\mathcal{Q}'^i)=i\cdot \tb(\mathcal{Q}')=0$$ and $$\rot(\mathcal{Q}'^i)=i\cdot\rot(\mathcal{Q}')=2i.$$

\begin{figure}
\centering
\includegraphics[width=1.5in]{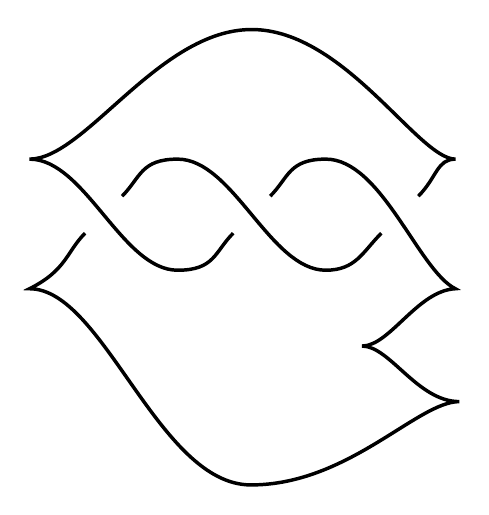}
\put(-1.55,-0.2){$\tb(\mathcal{K})=0$ and $\rot(\mathcal{K})=1$}
\caption{A Legendrian representative $\mathcal{K}$ for the right-handed trefoil.}\label{fig:legtrefoil}
\end{figure}

Consider the Legendrian representative $\mathcal{K}$ for the right-handed trefoil given in Fig.~\ref{fig:legtrefoil}. Since $\tb(\mathcal{K})=0$, we can perform the Legendrian satellite operation on $\mathcal{K}$ using the pattern $\mathcal{Q}'^i$ to get a Legendrian representative $\mathcal{Q}'^i(\mathcal{K})$ for the untwisted satellite $Q^i(RHT)$, and by \cite[Remark 2.4]{Ng01} we see that, since winding number of $Q^i$ is one,  
$$\tb(\mathcal{Q}'^i(\mathcal{K}))=\tb(\mathcal{Q}'^i)+\tb(\mathcal{K})=0$$
and $$\rot(\mathcal{Q}'^i(\mathcal{K}))=\rot(\mathcal{Q}'^i)+\rot(\mathcal{K})=2i+1.$$

Then using \cite[Theorem 1]{Plam04}, we see that 
$$\tb(\mathcal{Q}'^i(\mathcal{K}))+\lvert\rot(\mathcal{Q}'^i(\mathcal{K}))\rvert\leq 2\tau(Q^i(RHT))-1,$$
that is, 
$$i+1\leq \tau(Q^i(RHT)).$$
Therefore, if $i\geq 4$, $\tau(Q^i(RHT))>4$ as needed.  \end{proof}

\begin{remark} It is natural to ask whether our method would work for the links obtained by using $\Wh_1$ or $\Wh_2$ instead of $\Wh_3$, since those have many fewer crossings; these links are shown in Figure~\ref{fig:wh1and2}. The pattern corresponding to the link obtained by using $\Wh_1$ is called the \textit{Mazur pattern}, and has been widely studied, e.g.\ in~\cite{CFHeHo13, CDR14, Ray15, Lev14}. In~\cite{CFHeHo13} it was shown that the link using $\Wh_1$ is not topologically concordant to the Hopf link. For the link using $\Wh_2$, we can use a C--complex as in Remark~\ref{rem:davis} to compute the multivariable Alexander polynomial, which turns out to be 
$$-t_1^2t_2^2+2t_1^2t_2-t_1^2+2t_1t_2^2-3t_1t_2+2t_1-t_2^2+2t_2-1.$$ 
This can be used to show that this link is not topologically concordant to the Hopf link, using Kawauchi's result on the Alexander polynomials of concordant links in~\cite{Kaw78}.

\begin{figure}
\centering
\includegraphics{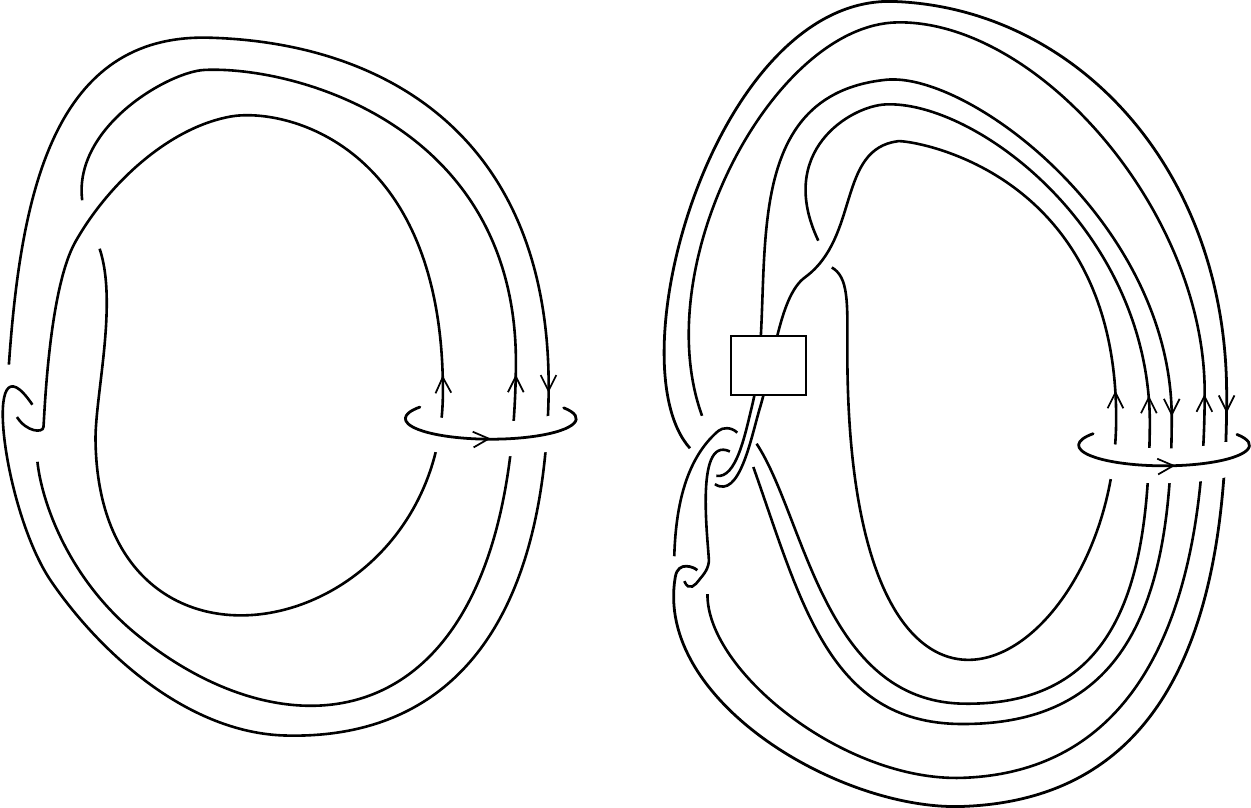}
\put(-2.025,1.725){$-2$}
\caption{The links obtained by using $\Wh_1$ (left) and $\Wh_2$ (right).}\label{fig:wh1and2}
\end{figure}

\end{remark}

Using our methods, we can prove the following theorem. 

\begin{theorem} Any 2--component link $(P,\eta)$ with linking number one, unknotted components, and Alexander polynomial one, where the corresponding pattern has a Legendrian diagram $\mathcal{P}$ with $\tb(\mathcal{P})>0$ and $\tb(\mathcal{P})+\rot(\mathcal{P})\geq 2$, yields a family of links $(P^i,\eta(P^i))$ that are each topologically concordant to the Hopf link, but are smoothly distinct from one another and the Hopf link.  \end{theorem}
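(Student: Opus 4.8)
The plan is to observe that the proof of Theorem~\ref{thm:main} never used special features of $Q$ beyond exactly the hypotheses now imposed on $P$, so the argument assembles from the propositions already in hand. I would split the claim into its topological and smooth halves and treat them separately.

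For the topological half, I would first invoke the Alexander-polynomial criterion from Remark~\ref{rem:davis}: since the multivariable Alexander polynomial of $(P,\eta)$ is one, Davis's theorem~\cite{Davis06} gives that $(P,\eta(P))$ is topologically concordant to the Hopf link. This replaces the explicit Freedman-slicing argument of Proposition~\ref{prop:topconc1} with a hypothesis-driven one, which is precisely why the Alexander-polynomial condition is included. I would then run the satellite-splicing argument of Proposition~\ref{prop:topconc2} verbatim: cutting out a neighborhood of the concordance annulus on the $P$-component and regluing $ST\times[0,1]$ produces a topological concordance from $(P^{i+1},\eta(P^{i+1}))$ to $(P^i,\eta(P^i))$ inside a manifold homeomorphic to $S^3\times[0,1]$, and induction on $i$ yields topological concordance of every $(P^i,\eta(P^i))$ to the Hopf link.

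For the smooth half, the key is to verify that $P$ meets the three hypotheses of Proposition~\ref{prop:distinctiterates}. The winding-number-one condition is immediate, since for a pattern the winding number equals $\lk(P,\eta)$, which is one by assumption. That $P(U)$ is unknotted follows from the unknottedness of the components: because $\eta$ is unknotted, $ST=S^3\setminus N(\eta)$ is the standard solid torus, so $P(U)$ is literally the first component of the link viewed in $S^3$, which is unknotted. The Legendrian condition $\tb(\mathcal{P})>0$ and $\tb(\mathcal{P})+\rot(\mathcal{P})\geq 2$ is assumed outright. Proposition~\ref{prop:distinctiterates} then yields a knot $K$ with $P^i(K)$ not exotically concordant to $P^j(K)$ for all distinct $i,j\geq 0$. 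Applying the contrapositive of Proposition~\ref{prop:diff_functions} — if $(P^i,\eta(P^i))$ and $(P^j,\eta(P^j))$ were smoothly concordant, then $P^i(K)$ and $P^j(K)$ would be exotically concordant for every $K$ — shows the links are pairwise smoothly distinct; taking $j=0$, so that $(P^0,\eta(P^0))$ is the Hopf link, distinguishes each link from the Hopf link as well.

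I expect no genuine obstacle here: the theorem is a repackaging of the already-proved propositions, and the only care required is the bookkeeping in the smooth half, namely confirming that linking number one and unknotted components translate correctly into the winding-number-one and $P(U)$-unknotted hypotheses of Proposition~\ref{prop:distinctiterates}. The one point to watch is that Remark~\ref{rem:davis} is stated for the base link $(P,\eta)$ rather than for its iterates, so the topological argument genuinely needs the iteration step of Proposition~\ref{prop:topconc2} rather than a direct appeal to~\cite{Davis06} for each $i$.
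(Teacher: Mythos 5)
Your proposal is correct and is essentially the paper's own argument: the paper proves this theorem only implicitly (``Using our methods''), and those methods are precisely your assembly --- Davis's theorem~\cite{Davis06} applied to the hypothesis that the Alexander polynomial is one (replacing the Freedman argument, which was special to $\Wh_3$, exactly as Remark~\ref{rem:davis} foreshadows), the splice-and-iterate concordance of Proposition~\ref{prop:topconc2}, and smooth distinctness via Propositions~\ref{prop:distinctiterates} and~\ref{prop:diff_functions}. Your bookkeeping is also right, including the two points that actually need checking: winding number one follows from $\lk(P,\eta)=1$, $P(U)$ unknotted follows from the unknottedness of the components since $S^3\setminus N(\eta)$ is a standard solid torus, and the topological half genuinely needs the iteration step rather than a direct appeal to~\cite{Davis06} for each $(P^i,\eta(P^i))$.
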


For most values of $i$ (including $i\geq 4$, but possibly more values), the above links will be distinct in smooth concordance from the Cha--Kim--Ruberman--Strle examples, using the proof of Proposition~\ref{prop:distinctfromCKRS}. Using a more general version of Proposition~\ref{prop:distinctiterates} we can weaken our assumption that the first component of the link is unknotted, and instead require it to be slice and the pattern to be strong winding number one (see~\cite{Ray15}).

\bibliographystyle{plain}
\bibliography{knotbib}

\begin{thebibliography}{10}

\bibitem{ChaKimRubStr12}
Jae~Choon Cha, Taehee Kim, Daniel Ruberman, and Sa{\v{s}}o Strle.
\newblock Smooth concordance of links topologically concordant to the {H}opf
  link.
\newblock {\em Bull. Lond. Math. Soc.}, 44(3):443--450, 2012.

\bibitem{ChaKo99}
Jae~Choon Cha and Ki~Hyoung Ko.
\newblock On equivariant slice knots.
\newblock {\em Proc. Amer. Math. Soc.}, 127(7):2175--2182, 1999.

\bibitem{ChaPow14a}
Jae~Choon Cha and Mark Powell.
\newblock Covering link calculus and the bipolar filtration of topologically
  slice links.
\newblock {\em Geom. Topol.}, 18(3):1539--1579, 2014.

\bibitem{CimFlo}
David Cimasoni and Vincent Florens.
\newblock Generalized {S}eifert surfaces and signatures of colored links.
\newblock {\em Trans. Amer. Math. Soc.}, 360(3):1223--1264 (electronic), 2008.

\bibitem{CDR14}
Tim~D. Cochran, Christopher~W. Davis, and Arunima Ray.
\newblock Injectivity of satellite operators in knots concordance.
\newblock {\em J. Topol.}, 2014.
\newblock Advance Access published April 1, 2014.

\bibitem{CFHeHo13}
Tim~D. Cochran, Bridget~D. Franklin, Matthew Hedden, and Peter~D. Horn.
\newblock Knot concordance and homology cobordism.
\newblock {\em Proc. Amer. Math. Soc.}, 141(6):2193--2208, 2013.

\bibitem{CHHo13}
Tim~D. Cochran, Shelly Harvey, and Peter Horn.
\newblock Filtering smooth concordance classes of topologically slice knots.
\newblock {\em Geom. Topol.}, 17(4):2103--2162, 2013.

\bibitem{CHo15}
Tim~D. Cochran and Peter~D. Horn.
\newblock Structure in the bipolar filtration of topologically slice knots.
\newblock {\em Algebr. Geom. Topol.}, 4:415--428, 2015.

\bibitem{Cooper79}
D.~Cooper.
\newblock The universal abelian cover of a link.
\newblock In {\em Low-dimensional topology ({B}angor, 1979)}, volume~48 of {\em
  London Math. Soc. Lecture Note Ser.}, pages 51--66. Cambridge Univ. Press,
  Cambridge-New York, 1982.

\bibitem{DR13}
Christopher~W. Davis and Arunima Ray.
\newblock Satellite operators as group actions on knot concordance.
\newblock to appear: \textit{Alg.\ Geom.\ Topol.}, preprint:
  http://arxiv.org/abs/1306.4632, 2012.

\bibitem{Davis06}
James~F. Davis.
\newblock A two component link with {A}lexander polynomial one is concordant to
  the {H}opf link.
\newblock {\em Math. Proc. Cambridge Philos. Soc.}, 140(2):265--268, 2006.

\bibitem{End95}
Hisaaki Endo.
\newblock Linear independence of topologically slice knots in the smooth
  cobordism group.
\newblock {\em Topology Appl.}, 63(3):257--262, 1995.

\bibitem{Freed88}
Michael~H. Freedman.
\newblock {${\rm Whitehead}_3$} is a ``slice'' link.
\newblock {\em Invent. Math.}, 94(1):175--182, 1988.

\bibitem{FriedlPow14}
Stefan Friedl and Mark Powell.
\newblock Links not concordant to the {H}opf link.
\newblock {\em Math. Proc. Cambridge Philos. Soc.}, 156(3):425--459, 2014.

\bibitem{Gom86}
Robert~E. Gompf.
\newblock Smooth concordance of topologically slice knots.
\newblock {\em Topology}, 25(3):353--373, 1986.

\bibitem{HeK12}
Matthew Hedden and Paul Kirk.
\newblock Instantons, concordance, and {W}hitehead doubling.
\newblock {\em J. Differential Geom.}, 91(2):281--319, 2012.

\bibitem{HeLivRub12}
Matthew Hedden, Charles Livingston, and Daniel Ruberman.
\newblock Topologically slice knots with nontrivial {A}lexander polynomial.
\newblock {\em Adv. Math.}, 231(2):913--939, 2012.

\bibitem{Hom14}
Jennifer Hom.
\newblock The knot floer complex and the smooth concordance group.
\newblock {\em Comment.\ Math.\ Helv.}, 89(3):537--570, 2014.

\bibitem{Kaw78}
Akio Kawauchi.
\newblock On the {A}lexander polynomials of cobordant links.
\newblock {\em Osaka J. Math.}, 15(1):151--159, 1978.

\bibitem{Lev14}
Adam~Simon Levine.
\newblock Non-surjective satellite operators and piecewise-linear concordance.
\newblock Preprint, available at http://arxiv.org/abs/1405.1125, 2014.

\bibitem{Ng01}
Lenhard~L. Ng.
\newblock The {L}egendrian satellite construction.
\newblock Preprint: http://arxiv.org/abs/0112105, 2001.

\bibitem{Plam04}
Olga Plamenevskaya.
\newblock Bounds for the {T}hurston-{B}ennequin number from {F}loer homology.
\newblock {\em Algebr. Geom. Topol.}, 4:399--406, 2004.

\bibitem{Ray15}
Arunima Ray.
\newblock Satellite operators with distinct iterates in smooth concordance.
\newblock {\em Proc. Amer. Math. Soc.}, 143(11):5005--5020, 2015.

\bibitem{Rob12}
Lawrence~P. Roberts.
\newblock Some bounds for the knot {F}loer {$\tau$}-invariant of satellite
  knots.
\newblock {\em Algebr. Geom. Topol.}, 12(1):449--467, 2012.

\bibitem{Ro90}
Dale Rolfsen.
\newblock {\em Knots and links}, volume~7 of {\em Mathematics Lecture Series}.
\newblock Publish or Perish Inc., Houston, TX, 1990.
\newblock Corrected reprint of the 1976 original.

\end{thebibliography}
\end{document}